\newtheorem{thm}{Theorem}[section]
\newtheorem{lem}{Lemma}[section]
\newtheorem{fact}{Fact}[section]
\newtheorem{alg}{Procedure}[section]
\newcommand{\qed}{\hspace*{\fill} \rule{7pt}{7pt}}
\newcommand{\rw}[1]{{\color{black} #1}}
\newcommand{\jz}[1]{{\color{black} #1}}
\newcommand{\ps}[1]{{\color{black} #1}}
\begin{document}

\def\concat#1#2{\sideset{_{#1}}{_{#2}}{\mathop{\circ}}}

\title{Multicoloring of cannonball graphs}

\small{
\author{
Petra \v{S}parl\\
University of Maribor,\\
Faculty of Organizational Sciences,\\
Kidri\v{c}eva 55a, \\
SI-4000 Kranj, Slovenia\\
and \\
IMFM,
Ljubljana, Slovenia\\
{\tt petra.sparl@fov.uni-mb.si}
\and 
Rafa\l ~Witkowski\thanks{This work was supported by grant N206 1842 33 for years 2011-2014}\\
Adam Mickiewicz University,\\Faculty of Mathematics \\ and Computer Science,\\ul. Umultowska 87\\ 61-614 Pozna\'n, Poland\\{\tt rmiw@amu.edu.pl}
\and Janez \v{Z}erovnik\thanks{Supported in part by ARRS, the research agency of Slovenia.}\\
University of Ljubljana,\\
Faculty of Mechanical Engineering, \\
A\v sker\v ceva 6, \\
SI-1000 Ljubljana, Slovenia\\
and \\
IMFM, 
Ljubljana, Slovenia\\
{\tt janez.zerovnik@fs.uni-lj.si, janez.zerovnik@imfm.si }
}
\date{\today}

\maketitle


\begin{abstract}
The frequency allocation problem  that appeared in the design of cellular telephone networks can be regarded as a~multicoloring problem on a~weighted hexagonal graph, 
which opened some still interesting mathematical problems.  We  generalize    the multicoloring  problem into higher dimension and present the  first \jz{approximation} 
algorithms for multicoloring of  so called cannonball graphs.
\end{abstract}

\section{Introduction}

A fundamental problem that appeared  in \ps{the} design  of   cellular networks is to assign  sets of frequencies to transmitters  in order to avoid \rw{the} unacceptable interferences.
 The number of frequencies demanded at a transmitter may vary between   transmitters.  The problem appeared in the sixties and  was soon related to \rw{graph} multicoloring
  problem  (see the early survey \cite{Hale}).  It received \rw{an} enormous attention in the nineties and is still of considerable interest  (see  \cite{Annals} and the references there). 
Besides the mobile telephony there are several  applications of frequency assignment including  radio and television broadcasting, military applications, 
satellite communication and wireless LAN   \cite{Annals}. 
A sizable  part of theoretical studies is  concentrated on the simplified model when  the underlying graph which has to be multicolored is a subgraph of triangular grid   (see \cite{pierwsza, channel, Hale}).
This is a natural choice because   it is well known that   hexagonal cells provide a coverage with \rw{the} optimal ratio of  the distance between centers compared to the area covered by each cell.
Such graphs are called   hexagonal graphs \cite{a43,a54,tfree}.
Indeed, the model is a reasonable approximation for the rural cellular networks where the underlying graph is often nearly planar, 		\rw{and a} popular example 
are the   sets of benchmark problems    based on \rw{the} real  cellular  network  around  Philadelphia  \cite{Anderson} (see the FAP website  \cite{FAPpage}). 
Although the multicoloring of hexagonal graphs seems to be a very simplified optimization problem, some interesting mathematical questions were asked at the time  
that are still open. An example is the  conjecture \jz{of McDiarmid and Reed} saying  that the multichromatic number \ps{(the formal definition is given on page \pageref{hi})} 
 of any hexagonal graph $G$  is between   
$\omega(G)$ and $9\omega(G)/8$,   where $\omega(G)$  is the weighted clique number \cite{pierwsza}.
On the other hand, the hexagonal graph  model is known to be practically  useless in urban areas, 
where high concrete  buildings on one hand prevent propagation of \rw{the} radio  signals  and 
on the other hand allow very high concentration of users.  
Loosely speaking, a three dimensional  model may be needed in contrast to the hexagonal graphs that are \ps{a} good \ps{model} for two dimensional  networks. 
In this paper  we discuss a generalization of the multicoloring problem on hexagonal graphs from planar case to  three dimensions. 
It is well known that   hexagonal cells of the same size with centers positioned in the triangular grid    provide an optimal coverage of the plane.
Optimality here means the best ratio between the diameter and the area covered by the cell.
The situation is much more interesting in three dimensions.
Obviously, optimal cells would be nearly balls, and the question is how to position \rw{the} centers of the balls to achieve \rw{the} optimal diameter to \rw{the} volume ratio.
The famous  Kepler conjecture   was  \ps{a} longstanding    conjecture about \rw{the} ball packing in three-dimensional Euclidean space. 
It says that no arrangement of  equally sized balls filling space has  greater average density than that of the cubic close packing (face-centered cubic) and \rw{the} hexagonal close packing arrangements. The density of these arrangements is slightly greater than 74\%. It may be interesting to note that the solution of Kepler's conjecture is included as a part of 18th problem in the famous   Hilbert's problem list back in 1900 \cite{hilbert}.
%
%
Recently  Thomas Hales, following an approach suggested by Fejes Toth, published a proof of the Kepler conjecture.  
 For more details, see \cite{Hales2000,Hales2005i}.
Given an optimal arrangement of   balls, we define a graph by taking the balls (or   centers of  balls) as vertices and  connect \rw{each} \ps{pair of} touching balls with \rw{an} edge.
Nonnegative demands are assigned to each vertex and  we are interested in multicoloring of the  graph induced on  vertices of positive demand.
Loosely speaking, we generalize the problem of  multicoloring of hexagonal graphs from two dimensions to three dimensions. 
The question has been asked at the Oberwolfach seminar  Algorithmische Graphentheorie \cite{oberwolfach}  and we are not aware of any result since then.

More formally,  
we are interested in   multicoloring of weighted graphs $G=(V(G), E(G), d)$, where
$V=V(G)$ is the set of  vertices, $E=E(G)$ is the set of edges, and  $d$ assigns  a positive  integer $d(v)$  to   vertex $v\in V$.
$d(v)$ is the {\em weight} of a vertex, here also called {\em demand}.
Adjacent vertices are called  {\em neighbors}. 
The {\em degree} of a vertex,   $deg_G(v)  = deg(v) $ is the number of neighbors of $v$.
A {\em proper multicoloring} of $G$ is a mapping $f$ from $V(G)$ to subsets of integers such that $\left\vert f(v)\right\vert \geq d(v)$ for any vertex  $v\in V(G)$ and
 $f(v)\cap f(u)=\emptyset$ for any pair of adjacent vertices $u$ and $v$ in the graph $G$. The minimal cardinality of a proper multicoloring of $G$, $\chi_{m}(G)$, is called the {\em multichromatic number}. \label{hi} Another invariant of interest in this context is the {\em(weighted) clique number}, $\omega(G)$, defined as follows: 
The weight of a clique of $G$ is the sum of demands on its vertices and $\omega(G)$ is the maximal clique weight on $G$. 
Clearly, ${\chi}_{m}(G) \geq \omega(G)$.
 {\em Hexagonal graph} is \rw{the} graph induced on vertices of triangular grid of positive demand. Or, in other words, cells of hexagonal grid are assigned integer demands, 
and the graph is \ps{composed  } by taking cells as vertices and \rw{two} hexagons sharing an edge are regarded to be adjacent.
In 3-dimensional case we will consider   optimal arrangements of   balls, and define a graph by taking balls (with positive demand) as vertices, and connect touching balls by edges.
We call  these graphs \rw{the} {\em cannonball graphs} as Keplers motivation for studying the arrangements of balls was  optimal arrangement   of   cannonballs.
In the last decade there were several results on upper bounds for the  multichromatic number in terms of weighted clique number for  hexagonal graphs, some of which also provide   approximation algorithms that are fully distributed and run in constant time
\cite{Havet,havetJZ,Janssen,pierwsza,channel,cykle,a73lin,a43,a54,a76subclass,ost,tfree,a1712,a43moj,isco,a65}.
The best \rw{known} approximation ratios   are 
${\chi}_{m}(G)\leq(4/3)\omega(G)+O(1)$  in general   \cite{pierwsza,cykle,a43} and 
${\chi}_{m}(G)\leq(7/6)\omega(G)+O(1)$ for triangle free hexagonal graphs   \cite{Havet,ost,a73lin}.
The conjecture of McDiarmid and Reed: ${\chi}_{m}(G)\leq(9/8)\omega(G)+O(1)$ remains an open problem \cite{pierwsza}.
 
No approximation algorithm  and no upper bound was previously  known for \rw{the} multichromatic number of cannonball graphs.
Here we give two upper bounds, where the first is easily implied by known  results for hexagonal graphs  (because  a layer in a cannonball graph is a hexagonal graph)
and  the second is an improvement of the first upper bound using some  structural properties of the cannonball graphs. 
In both cases, constructions are given thus providing polynomial approximation algorithms.
The main result of this paper  that gives the first answer to the problem asked in  \cite{oberwolfach} is 

\begin{thm}\label{twierdzenie}
There is an approximation algorithm for   multicoloring cannonball graphs which uses at  most $\frac{11}{6}\omega(G) + O(1)$ colors.  
Time complexity of the algorithm is polynomial.
\end{thm}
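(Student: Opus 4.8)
The plan is to construct the coloring layer by layer. First I would record the structure of a cannonball graph that makes this possible: fixing one family of parallel close-packed planes partitions $V(G)$ into \emph{layers} $L_1,L_2,L_3,\dots$, where the subgraph induced on each $L_j$ is a hexagonal graph, a ball of $L_j$ can only touch balls of $L_{j-1}\cup L_j\cup L_{j+1}$, and --- the crucial geometric fact --- the (at most) three balls of $L_{j+1}$ that touch a fixed ball $v\in L_j$ are pairwise in contact, so together with $v$ they form a copy of $K_4$; the symmetric statement holds downward. Writing $T^{+}(v)\subseteq L_{j+1}$ and $T^{-}(v)\subseteq L_{j-1}$ for these two triangles of neighbours of $v$, and $d(T)$ for the total demand on a triangle $T$, the weighted clique bound then yields $d(v)+d(T^{+}(v))\le\omega(G)$ and $d(v)+d(T^{-}(v))\le\omega(G)$. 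Since two layers at distance at least two are non-adjacent, the subgraph induced by the odd-indexed layers is a disjoint union of hexagonal graphs of weighted clique number at most $\omega(G)$, and likewise for the even-indexed layers; multicoloring each of these two subgraphs by the known $\frac43\omega+O(1)$ algorithm for hexagonal graphs, with two disjoint color sets, already gives a proper multicoloring of $G$ with $\frac83\omega(G)+O(1)$ colors. This is the easy first bound, and the task is to improve the constant $\frac83$ to $\frac{11}{6}=\frac43+\frac12$.

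To gain the missing $\frac56\omega$, I would let the two color sets \emph{overlap}. Use $A=\{1,\dots,\lceil\frac43\omega\rceil+O(1)\}$ for the odd layers and the shifted set $B=\{\lceil\frac12\omega\rceil+1,\dots,\lceil\frac12\omega\rceil+\lceil\frac43\omega\rceil+O(1)\}$ for the even layers, so that $|A\cup B|=\frac{11}{6}\omega(G)+O(1)$ while the overlap $O=A\cap B$ has roughly $\frac56\omega$ colors. Running the hexagonal $\frac43$-algorithm on the odd layers inside $A$ and on the even layers inside $B$ keeps every layer internally proper, and the only conflicts that can still occur are between a ball of $L_j$ and a touching ball of $L_{j\pm1}$; any such conflict must use a color of $O$. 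The heart of the proof is therefore to run the per-layer algorithms in a \emph{coordinated} fashion, so that no color of $O$ is used simultaneously on $v\in L_j$ and on a vertex of $T^{+}(v)\cup T^{-}(v)$. Two ingredients should make this possible: (i) the proper $3$-coloring of the triangular lattice underlying each layer, which lets one earmark, for each vertex, which sub-band of its color set it may draw from, as a function of its base color and of the base colors of the balls it lies above and below; and (ii) the inequalities $d(v)+d(T^{\pm}(v))\le\omega$, which force a heavy ball to be in contact only with light triangles in the adjacent layers, so that the demand genuinely competing for colors of $O$ is small enough to be accommodated after a suitable cyclic shift of the sub-bands from one layer to the next.

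What remains is bookkeeping: check that the coordinated assignment is a mild enough modification of the known hexagonal algorithm that the per-layer count $\frac43\omega+O(1)$ is unchanged; check that the additive $O(1)$ terms, being uniform over layers, can be absorbed into one global $O(1)$; and check polynomial running time --- the hexagonal multicoloring algorithms of \cite{pierwsza,cykle,a43} run in polynomial time (indeed they are distributed and constant time), and both the layer decomposition and the coordination rule are local, so the whole procedure is polynomial. Summing the contributions of the three palette regions gives at most $\frac43\omega(G)+\frac12\omega(G)+O(1)=\frac{11}{6}\omega(G)+O(1)$ colors.

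I expect the real obstacle to be precisely the coordination described above: proving that an overlap band of only $\approx\frac56\omega$ colors actually suffices, i.e.\ that one can align the hexagonal $\frac43$-colorings of three consecutive layers $L_{j-1},L_j,L_{j+1}$ so that the colors reused across the $L_{j-1}$--$L_j$ contacts never collide on $L_j$ with those reused across the $L_j$--$L_{j+1}$ contacts. This is the point where the geometry of the close packing must be used quantitatively --- the tetrahedral contact pattern between layers and the way the three base colors of one layer sit relative to those of the next --- rather than merely the abstract fact that a single layer is a hexagonal graph.
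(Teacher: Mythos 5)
Your layer decomposition, the $K_4$ structure of the cross-layer contacts, and the resulting $\frac{8}{3}\omega(G)+O(1)$ warm-up bound are all correct and match the paper's own easy first bound. But the improvement to $\frac{11}{6}$ in your write-up rests entirely on the ``coordination'' claim --- that the two runs of the hexagonal $\frac{4}{3}$-algorithm on odd and even layers can be aligned so that no color of the overlap band $O$ (of size only about $\frac{5}{6}\omega$) is used simultaneously on a vertex $v\in L_j$ and on a vertex of $T^{+}(v)\cup T^{-}(v)$ --- and this claim is never proved; you yourself flag it as ``the real obstacle.'' That is not a bookkeeping step, it is the entire content of the theorem. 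The inequalities $d(v)+d(T^{\pm}(v))\le\omega(G)$ and the per-layer $3$-coloring do not obviously yield such an alignment: the hexagonal algorithms of \cite{pierwsza,a43} choose which part of the palette a vertex draws from based on its in-layer neighborhood, and a vertex can be pushed deep into $O$ by an in-layer neighbor while a cross-layer neighbor is simultaneously pushed into $O$ by \emph{its} in-layer neighbors; one can have $d(v)+d(u)=\omega$ for adjacent $v\in L_j$, $u\in L_{j+1}$, which exceeds $|O|\approx\frac{5}{6}\omega$, so disjointness inside $O$ is not automatic and would require a genuinely new argument about how three consecutive layers interact. As written, the proposal is a plan with its key lemma missing.

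For contrast, the paper avoids inter-layer coordination of two independent hexagonal colorings altogether. It exploits the tetrahedral clique structure globally: since every clique has at most $4$ vertices, the grid admits a proper $4$-coloring, and each vertex first takes $\min\{d(v),\kappa(v)\}$ colors from its own base palette, where $\kappa(v)$ is the maximum ceiling-average weight of a triangle through $v$; this costs $4\lceil\omega(G)/3\rceil\le\frac{4}{3}\omega(G)+O(1)$ colors and leaves a residual \emph{triangle-free} cannonball graph of weighted clique number at most $\lceil\omega(G)/3\rceil$. Very heavy vertices turn out to be isolated there and degree-$4$ vertices can borrow unused colors from neighbors' base palettes; what remains has maximum degree $3$, hence is $3$-colorable by Brooks' theorem and can be finished with $\frac{3}{2}\cdot\frac{\omega(G)}{3}=\frac{1}{2}\omega(G)+O(1)$ fresh colors, giving $\frac{11}{6}\omega(G)+O(1)$ in total. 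If you want to salvage your route, you would need to prove the alignment lemma for three consecutive layers; otherwise I would recommend switching to a borrowing-plus-residual-graph argument of the paper's type.
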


The paper is organized as follows. In the next section we formally define some basic terminology. 
In Section \ref{algorytm}, we present an overview of the algorithm, while in Section \ref{dowody} 
we provide a proof of Theorem \ref{twierdzenie}. In the last Section we give some ideas for futher work.

\section{Hexagonal and cannonball graphs}\label{preliminary}

First we formally define hexagonal and cannonball graphs.
Recall the  formal  definition of hexagonal graphs: 
the position of each vertex is an integer linear combination $x\vec{p}+y\vec{q}$ of two vectors $\vec{p}=(1,0)$  and $\vec{q}=(\frac{1}{2}, \frac{\sqrt{3}}{2})$
and  the  vertices of the triangular \ps{grid  } are identified with pairs $(x,y)$ of integers. 
Put an edge connecting two vertices if the points representing the vertices are at distance one in the triangular grid (in other words, when the corresponding hexagonal cells are adjacent).
To construct a hexagonal graph $G$,  positive weights  are assigned to a finite subset of points in the grid
and $G$ is a subgraph induced on $V(G)$,  the set of  grid vertices with positive weights.   
  Cannonball graphs are constructed in a similar way. However, we have many possibilities  already when constructing   the underlying grid,  
which, loosely speaking,  consists of tetrahedrons and will be called  {\em tetrahedron grid}  $T$. 
Optimal arrangement of   balls in one layer is to put the centers of balls in \rw{the} points of triangular grid. 
Then, there are exactly two possibilities to put a second layer on the top of the first \ps{layer}. 
These two arrangements are obviously symmetric, however, when choosing a position for the third layer, there are two possibilities that give rise to 
different arrangements. We will call them {\em layer-arrangement (a)} and {\em layer-arrangement (b)}, respectively (see figure \ref{BallPacking}).

\begin{figure}[h]
 \begin{center}
 \includegraphics[height=12cm]{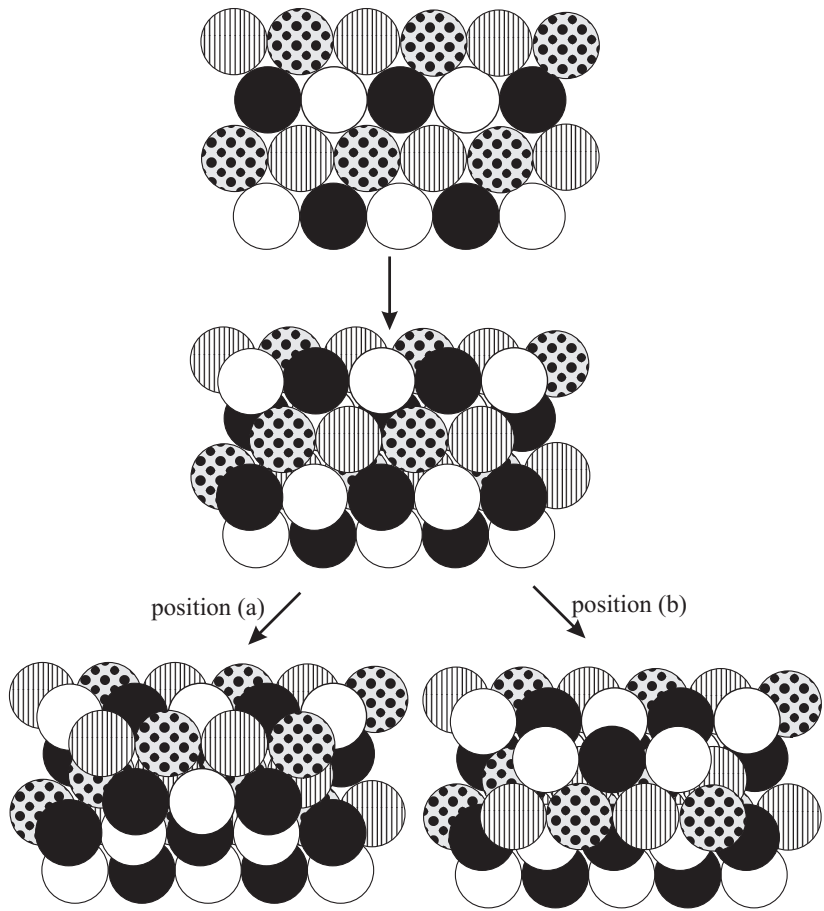}
 \caption{Two different arrangements of the third layer.} 
 \label{BallPacking}
 \end{center}
\end{figure}

Consequently, we have an infinite number of tetrahedron grids, that all came from  the optimal ball arrangements.
One of the arrangements, called the  cubic close packing (see case (a) of figure \ref{BallPacking}),
can be described nicely by introducing a third vector $\vec{r}=(\frac{1}{2}, \frac{\sqrt{3}}{6}, \frac{\sqrt{6}}{3})$
in addition to  $\vec{p}=(1,0,0)$ and $\vec{q}=(\frac{1}{2}, \frac{\sqrt{3}}{2},0)$.
Now  the position of each vertex is an integer linear combination $x\vec{p}+y\vec{q}+z\vec{r}$ 
and the vertices of the triangular \ps{grid} may be identified with  triplets $(x,y,z)$ of integers. 
Given the vertex $v$, we will refer to its coordinates as $x(v)$, $y(v)$ and $z(v)$, 
or shortly $x$, $y$, and $z$,  when there is no confusion possible.
For other  arrangements  there is no  such  easy extension of  the notation from hexagonal graphs.
A  cannonball graph $G$  is obtained by assigning  integer weights to the points of the tetrahedron grid $T$, 
taking as   $V(G)$  the vertices  in the grid with positive weights, and introducing edges between vertices  at  euclidean distance one
 (in other words, connecting the touching balls). 
The cannonball graphs based on the cubic close packing will be called  {\em regular cannonball graphs}.
Clearly, from the construction it follows that any layer of a cannonball graph is a hexagonal graph (maybe not connected).

Formally, a {\em cannonball graph} is a graph induced on vertices of positive weight.

There is a natural basic 4-coloring  of  (unweighted) cannonball graph.
Start with any layer and call it the base layer.  Introduce  coordinates  $(x,y,0)$  in this layer and define  the base coloring by the  formula 
\begin{equation}
bc(v)=x  \bmod 2 + 2 (y \bmod 2) . \label{base_colors_2D}
\end{equation}
 Colors of vertices of the next  layers   are \ps{then} determined exactly as follows.
It is obvious that whenever we store a new layer  above  (or under)  the previous one  with fixed coloring,  we know that  each ball  from the new layer  is connected to  exactly
three  balls from the previous  layer, and all of those balls   have different colors.  
Thus there is exactly one extension of the  four  coloring to the next layer (see figure\rw{s} \ref{BallPacking} and   \ref{12neigh}, where 4-coloring, using colors  $0,1,2,3$}, is presented). 
It is easy to see that this rule, starting from (\ref{base_colors_2D}),  gives  a proper coloring of the next layers.
In regular cannonball graphs this coloring can be  given by closed  expression  in the following way:
\begin{equation}
bc(v) = ((z+1)\bmod 2) (x \bmod 2 + 2 (y \bmod 2)) + (z \bmod 2)((x+1)\bmod 2 + 2 ((y+1) \bmod 2)) .
\end{equation}

From the construction of cannonball graphs it is clear that each vertex has (at most) 6 neighbors in its layer, 
and in addition (at most) three neighbors in each of  the neighboring layers. 
The degree of a vertex in cannonball graph is hence at most 12 (see figure \ref {12neigh}).

\begin{figure}[h]
 \begin{center}
 \includegraphics[height=4cm]{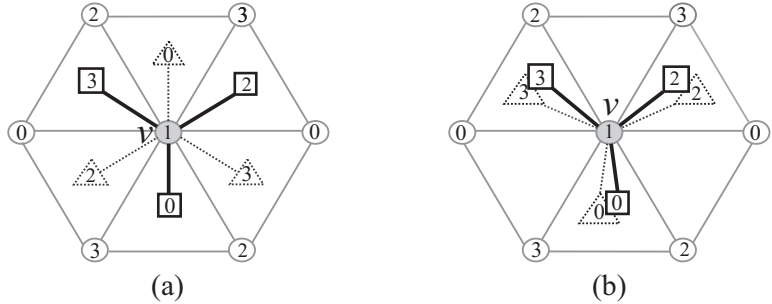}
 \caption{All possible 12 neighbors of vertex $v \in G$ for arrangements (a) and (b).
 \rw{Circles and gray lines represent the middle layer containing $v$, squares and thick lines represent the upper layer, dashed triangles and dashed lines represent the lower layer.}
 } 
 \label{12neigh}
 \end{center}
\end{figure}

The cliques in the cannonball graphs can have at most four vertices.
The \emph{(weighted) clique number}, $\omega(G)$, is the maximal clique weight on $G$, where the weight of a clique is the sum of weights on its vertices. 
 As cliques in cannonball  graphs can  have at most four vertices,
 the weighted clique number is the maximum weight over weights of all tetrahedrons, triangles, edges and weights of isolated vertices.
  Therefore,  we can  define  invariants  $\omega_i(G)$ which  denote  the maximal weight of clique of size at most $i$ on $G$. 
In fact, we can regard the clique numbers as based on the complete subgraphs of the grid graph because the vertices of weight 0 clearly do not contribute to the clique weights.
  For example,  $\omega_2(G)$ is the maximal weight over all edges and isolated vertices.
Clearly, for cannonball  graphs we have 
$$ \omega_1(G) \leq \omega_2(G) \leq \omega_3(G) \leq \omega_4(G) = \omega(G).
$$
 
An induced subgraph of the cannonball graph  without 3-clique will be  called a~{\em triangle-free cannonball graph}.

In the algorithm we will consider some subgraphs of the cannonball graph, in particular, it may  be useful to have   bipartite \ps{subgraphs} \jz{ and}  3-colorable subgraph\jz{s}.

In \cite{pierwsza} it was proved that for any weighted bipartite graph $H$, $\chi_m(H) = \omega(H)$. Bipartite graph $H$  can be optimally multicolored by the following procedure:

\begin{alg}\label{dwukolorowanie}\cite{algorithmica}
Let $H = (V', V'', E, d)$ be a~weighted bipartite graph. We get an optimal multicoloring of $H$ if
to each vertex $v \in V'$ we assign a~set of colors $\{1, 2, \ldots, d(v)\}$, while with each vertex $v \in V''$ we associate a~set of colors  
$\{m(v)+1, m(v)+2,\dots , m(v)+d(v)\}$, where $m(v)=\max\{d(u): \{u,v\} \in E\}$.
\end{alg}

For 3-colorable graphs, there is a simple $\frac{3}{2}$-approximation coloring algorithm.
\begin{lem}\label{kgoodlemma}\cite{tfree}
Every 3-colorable graph can be multicolored using at most $\frac{3}{2}\omega(G)+O(1)$ colors.
\end{lem}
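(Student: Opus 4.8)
The plan is to split every demand roughly in half and apply the bipartite procedure (Procedure~\ref{dwukolorowanie}) three times, once on each pair of colour classes. Fix a proper $3$-colouring of $G$ and let $V_1,V_2,V_3$ be the resulting independent sets (in every application considered in this paper the colouring is produced explicitly, so no $NP$-hard step is hidden here). For $v\in V(G)$ put $d^{+}(v)=\lceil d(v)/2\rceil$ and $d^{-}(v)=\lfloor d(v)/2\rfloor$, so that $d^{+}(v)+d^{-}(v)=d(v)$.

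Next I would build three weighted bipartite graphs, distributing the half-demands cyclically around the three classes: $H_{12}=G[V_1\cup V_2]$ with demand $d^{+}$ on $V_1$ and $d^{-}$ on $V_2$; $H_{23}=G[V_2\cup V_3]$ with demand $d^{+}$ on $V_2$ and $d^{-}$ on $V_3$; and $H_{31}=G[V_3\cup V_1]$ with demand $d^{+}$ on $V_3$ and $d^{-}$ on $V_1$. Each $H_{ij}$ is bipartite, hence by Procedure~\ref{dwukolorowanie} it admits an optimal multicolouring using $\omega(H_{ij})$ colours; carry out the three colourings on three pairwise disjoint palettes and let $f(v)$ be the union of the colour sets that $v$ receives. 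Each vertex of $V_i$ occurs in exactly two of the three graphs, once on a $d^{+}$ side and once on a $d^{-}$ side, so $|f(v)|=d^{+}(v)+d^{-}(v)=d(v)$. Moreover every edge of $G$ joins two different classes, hence lies inside a single $H_{ij}$, where $f$ is proper, and no conflict can arise between two different (disjoint) palettes. Therefore $f$ is a proper multicolouring of $G$.

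It remains to count colours. Since $H_{12}$ is bipartite, $\omega(H_{12})$ is attained either on an edge $uv\in E(H_{12})$ with $u\in V_1$, $v\in V_2$, where its value is $d^{+}(u)+d^{-}(v)$, or on a vertex isolated in $H_{12}$, where its value is the corresponding half-demand. Using the elementary inequality $\lceil a/2\rceil+\lfloor b/2\rfloor\le\lceil(a+b)/2\rceil\le (a+b+1)/2$ together with $d(u)+d(v)\le\omega_2(G)\le\omega(G)$ for edges and $d(v)\le\omega_1(G)\le\omega(G)$ for isolated vertices, we obtain $\omega(H_{12})\le(\omega(G)+1)/2$, and likewise $\omega(H_{23}),\omega(H_{31})\le(\omega(G)+1)/2$. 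Summing over the three disjoint palettes, $G$ is multicoloured with at most $3\cdot\tfrac{\omega(G)+1}{2}=\tfrac32\omega(G)+O(1)$ colours. Computing the $d^{\pm}$ values and running the linear-time Procedure~\ref{dwukolorowanie} three times is plainly polynomial.

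The one point requiring care — and the step I would be most careful with — is the cyclic assignment of ceilings and floors. It is precisely the rotation ``$d^{+}$ on one class, $d^{-}$ on the next'' that makes the demand count $|f(v)|=d(v)$ and the clique-weight bound $\tfrac12\omega(G)+O(1)$ (rather than $\omega(G)$) hold simultaneously; a uniform choice such as $d^{+}$ on every class would already break the demand count on one class of each pair. Everything else is routine bookkeeping.
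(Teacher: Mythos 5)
Your proposal is correct and follows essentially the same route as the paper's Procedure~\ref{trzykolorowanie}: split each demand in half, form the three bipartite graphs induced on pairs of colour classes, multicolour each optimally via Procedure~\ref{dwukolorowanie} on disjoint palettes, and observe that every vertex lies in exactly two of the three graphs while every edge lies in exactly one. Your cyclic distribution of $\lceil d(v)/2\rceil$ and $\lfloor d(v)/2\rfloor$ is a slightly more careful treatment of the integrality that the paper absorbs into the $O(1)$ term by simply writing $d_i(v)=d(v)/2$.
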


Using the proof of Lemma \ref{kgoodlemma} from \cite{tfree}, we can give a procedure for $\frac{3}{2}\omega(G)$-coloring of any $3$-colorable graph in the following way:

\begin{alg}\label{trzykolorowanie}
Let $H = (V,E,d)$ be a weighted, 3-colorable graph, \ps{colored} with colors $C,M,Y$. Let $V = V_C \cup V_M \cup V_Y$ be \rw{the} sets of vertices with colors $C,M,Y$ respectively.
Construct  three new weighted graph\rw{s} $H_i = (V_i,E_i,d_i)$, where \ps{$i \in\left\{  1,2,3 \right\}$}, $d_i(v) = d(v)/2$ \ps{for every $i \in V_{i}$}, $V_i = V \backslash V_j$ 
\ps{for} $j \in \{C,M,Y\}$ respectively, and $E_i \subseteq E$ is the set of all edges in $H$ with both endpoints in  $V_i$ ($H_i$ is induced by $V_i$).
 Each $H_i$ is bipartite since we have a 2-coloring of this graph. Use Procedure \ref{dwukolorowanie} to optimally multicolor graphs $H_1$, $H_2$ and $H_3$. 
Combining all these colorings we get a $\frac{3}{2}\omega(G)$-coloring.
\end{alg}

Recall that by definition all vertices  of \ps{a tetrahedron} grid which are not in $G$ must have weight $d(v)=0$.
 \jz{Then we need not  check whether \ps{a} vertex \ps{of the grid} is one of the vertices of $G$.}
Therefore:
$$ \omega_3(G) = \max\{d(u)+d(v)+d(t): \{u,v,t\} \in \tau(T) \},$$ where $\tau(T)$ is the set of all triangles of \ps{a} tetrahedron grid $T$.

\begin{def}\label{baseFunction}
For each vertex $v\in G$, define {\em base function} $\kappa$ as
$$\kappa (v) =  \max \{ a(v,u,t): \left\{v,u,t\right\}  \in \tau(T) \} ,$$
where
$$a(u,v,t) = \left\lceil \frac{d(u)+d(v)+d(t)}{3} \right\rceil,$$
is an average weight of the triangle $\left\{u,v,t\right\}\in~\tau(T)$.
\end{def}

Clearly, the following fact holds.

\begin{fact}\label{komega}
For each $v \in G$,  $$ \kappa (v) \leq \left\lceil \frac{\omega_3(G)}{3} \right\rceil \leq \left\lceil \frac{\omega(G)}{3} \right\rceil $$
\end{fact}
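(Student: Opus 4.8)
The statement is an immediate consequence of the definitions together with the monotonicity of the ceiling function, so the plan is simply to unwind the definitions. First I would fix a vertex $v \in G$ and let $\{v,u,t\} \in \tau(T)$ be an arbitrary triangle of the tetrahedron grid through $v$. By the definition of $\omega_3(G)$ as the maximal weight of a clique of size at most three (equivalently, the maximum over all triangles, edges and isolated vertices of the grid, since zero-weight vertices do not contribute), we have $d(v)+d(u)+d(t) \leq \omega_3(G)$. Applying the ceiling function, which is monotone non-decreasing, to both sides of $\frac{d(v)+d(u)+d(t)}{3} \leq \frac{\omega_3(G)}{3}$ yields
$$
a(v,u,t) = \left\lceil \frac{d(v)+d(u)+d(t)}{3} \right\rceil \leq \left\lceil \frac{\omega_3(G)}{3} \right\rceil .
$$

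Since this bound holds for every triangle $\{v,u,t\} \in \tau(T)$ containing $v$, taking the maximum over all such triangles gives $\kappa(v) \leq \left\lceil \omega_3(G)/3 \right\rceil$, which is the first inequality. For the second inequality I would invoke the chain $\omega_1(G) \leq \omega_2(G) \leq \omega_3(G) \leq \omega_4(G) = \omega(G)$ established earlier in the section; in particular $\omega_3(G) \leq \omega(G)$, and one more application of the monotonicity of the ceiling function to $\omega_3(G)/3 \leq \omega(G)/3$ gives $\left\lceil \omega_3(G)/3 \right\rceil \leq \left\lceil \omega(G)/3 \right\rceil$. Combining the two displayed inequalities finishes the proof.

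There is essentially no obstacle here: the only point worth a moment's care is that the maximum defining $\kappa(v)$ is taken over triangles of the grid $T$ rather than of $G$, but since vertices not in $G$ carry weight $0$ this does not affect the weight sums, so the bound by $\omega_3(G)$ is still valid. Everything else is the monotonicity of $\lceil \cdot \rceil$ applied twice.
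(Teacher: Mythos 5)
Your proof is correct and is exactly the routine unwinding of the definitions that the paper has in mind; the paper itself offers no proof, simply asserting the fact as clear. Your side remark about the maximum in $\kappa(v)$ ranging over grid triangles versus triangles of $G$ is handled correctly, since the paper's identity $\omega_3(G)=\max\{d(u)+d(v)+d(t):\{u,v,t\}\in\tau(T)\}$ already accounts for zero-weight vertices.
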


We call \rw{a} vertex $v$ {\em heavy} if $d(v)>\kappa(v)$, otherwise we call it {\em light}. If $d(v)>2\kappa(v)$, we say that the vertex $v$ is {\em very heavy}.

To color vertices of $G$ we use colors from an appropriate {\em palette}. For a given color $c$, its palette is defined as a~set of pairs $\{(c,i)\}_{i\in \mathbb{N} }$. A palette is called a {\em base color palette} if $c\in \{0,1,2,3\}$ is one of the base colors, and it is called \rw{an} {\em additional color palette} if $c \notin \{0,1,2,3\}$. 

If a vertex $v$ does not have a neighbor of color $i$ in $G$, we call such color a {\em free color} of $v$.

\section{Algorithms for multicoloring cannonball graphs}\label{algorytm}


Recall that a tetrahedron grid consists of   several horizontal layers which are triangular grids.  
No matter how we store one layer onto another, for every hexagonal graph in a particular horizontal layer one of the well known algorithms \cite{pierwsza, a43, a3324}
may be used. The best known approximation ratio is $\frac{4}{3}\omega(G')$, where $G'$ is \ps{a} hexagonal graph in a single layer (obviously $\omega(G') \leq \omega(G)$). 
Therefore, for each layer we need at most $\frac{4}{3}\omega(G)$ colors. We can use one palette of colors for odd layers and the second palette of colors for even layers, in order to prevent any conflict. All together we get an algorithm that uses at most $2 \cdot \frac{4}{3}\omega(G) = \frac{8}{3}\omega(G)$.
Since this bound is obviously not the best possible, the algorithm that improves this bound is presented in the continuation.

In many papers, e.g. \cite{pierwsza, a43, a1712, isco} a strategy of borrowing was used. The same idea can be used for  cannonball graphs. 
Our algorithm consists of two main phases. In the first phase
(Steps~1 and~2 of the algorithm) vertices take $\kappa(v)$ colors from its base color palette,
so use no more than   $\frac{4}{3}\omega(G)$  colors. After this phase, all light vertices in $G$ are fully colored, i.e. every light vertex
$v \in V(G)$ already  received all needed $d(v)$ colors.
The vertices that are heavy, but not very heavy, induce a~triangle-free cannonball graph with \rw{the} weighted clique number not exceeding
$\left\lceil \omega(G)/3 \right\rceil$.
Very heavy vertices in $G$ are isolated in the remaining graph and therefore they can easily be fully colored 
  (Step~2).
In the second phase (Steps~3 and~4 of the algorithm) we first color all vertices of degree 4 and \ps{thereby obtain a  3-colorable
 graph, for which  } Procedure \ref{trzykolorowanie} \ps{can be used} for satisfying the remaining demands   by using new colors.

More precisely, our algorithm consists of the following steps:

\subsection*{Algorithm}
\begin{description}

\item[{\bf Input:}] \ps{A} weighted cannonball graph $G=(V,E,d)$. Coordinates $(x(v),y(v),z(v))$, for $v\in V$.
\item[{\bf Output:}] A proper multicoloring of $G$, using at most $\frac{11}{6}\cdot\omega \left( G \right) + O(1)$ colors.

\item[{\bf Step 0}] For each vertex $v \in V$ compute its base color $bc(v)$
and its base function value
$$\kappa (v) = \max \left\{\left\lceil\frac{d(u)+d(v)+d(t)}{3} \right\rceil: \{v,u,t\}  \in \tau(T) \right\},$$
where $\tau(T)$ is \rw{the} set of all triangles in the tetrahedron grid $T$.

\item[{\bf Step 1}] For each vertex $v \in V$ assign $\min\{\kappa (v), d(v)\}$ colors from its base color palette to $v$. 
Construct a~new weighted triangle-free cannonball graph $G_1 = (V_1,E_1,d_1)$ where $d_1(v) = \max\{d(v)-\kappa (v), 0\}$, $V_1 \subseteq V$ is 
the set of vertices with $d_1(v)>0$ (heavy vertices in $G$) and $E_1 \subseteq E$ is the set of all edges in $G$ with both endpoints \ps{in} $V_1$ ($G_1$ is induced by $V_1$).

\item[{\bf Step 2}] For each vertex $v \in V_1$ with $d_1(v)>\kappa (v)$ (very heavy vertices in $G$) assign the first unused  $\kappa(v)$ colors of the base color 
palettes of its neighbors in the tetrahedron grid $T$. Construct a~new graph $G_2 = (V_2, E_2, d_2)$ where $d_2\left(v\right)$ is the difference
 between $d_1(v)$ and the number of colors assigned  in this step, $V_2 \subseteq V_1$ is the set of vertices with $d_2(v)>0$ and $E_2 \subseteq E_1$ is the set of 
all edges in $G_1$ with both endpoints \ps{in} $V_2$ ($G_2$ is induced by $V_2$).

\item[{\bf Step 3}] For each vertex $v \in V_2$ with \ps{$deg_{G_2}(v)=4$} assign unused colors from \ps{its} free \ps{base} color palette. 
Construct a~new 3-colorable graph $G_3 = (V_3, E_3, d_3)$ where $d_3\left(v\right)$ is the difference between  $d_2(v)$ and the number of colors assigned  in this step, 
$V_3 \subseteq V_2$ is the set of vertices with $d_3(v)>0$ and $E_3 \subseteq E_2$ is the set of all edges in $G_2$ with both endpoints \ps{in} $V_3$ ($G_3$
 is induced by $V_3$).

\item[{\bf Step 4}]  Apply Procedure \ref{trzykolorowanie} for \rw{the} graph $G_3$ by using colors from new additional color palettes.

\end{description}

\section{Correctness proof}
\label{dowody}

\rw{Recall that each vertex knows its position on the tetrahedron \ps{grid} $T$.}
Note that whenever we mention "very heavy/heavy/light vertex", we refer to the property of
this vertex in \ps{a} graph $G$, i.e. there is no reclassification in graphs $G_i, i\in{1,2,3}$.
\\ \\
In Step 0 we have to prove that each vertex can obtain its base color. Recall that we can assume that one of the horizontal layers is the base layer.
 We can compute the base coloring in each vertex $v=(x,y)$ of this layer by formula: $bc(v)=x \mod 2 + 2 (y \mod 2)$. 
\ps{ In} the neighboring layers (the above and the bottom one)   the colors \ps{are} determined by 4-coloring of the base layer. Thus, we can obtain a proper 4-coloring for the whole cannonball graph.
\\ \\
In Step 1 each heavy vertex $v$ in $G$ is assigned $\kappa (v)$ colors from its base color palette, while each light vertex $u$ is assigned $d(u)$ colors from its base color palette.
Hence the remaining weight of each vertex $v \in G_1$ is
$$d_1(v) = d(v)-\kappa(v).$$
Note that $G_1$ consists only of heavy vertices in $G$. Therefore

\begin{lem}\label{G1tf}
$G_1$ is a~triangle-free cannonball graph.
\end{lem}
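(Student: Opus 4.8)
The plan is to show that $G_1$ contains no triangle by proving that in every triangle $\{u,v,t\}\in\tau(T)$ of the underlying tetrahedron grid, at least one of the three vertices is \emph{light} in $G$, hence fails to survive into $V_1$. Since $G_1$ is an induced subgraph of the cannonball graph $G$ and the cannonball graph's cliques are exactly the complete subgraphs of the grid $T$, any triangle of $G_1$ would have to be one of the grid triangles in $\tau(T)$ with all three vertices heavy; ruling this out gives the claim.

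First I would fix an arbitrary grid triangle $\{u,v,t\}\in\tau(T)$ and recall from Definition \ref{baseFunction} that for each of its vertices $w\in\{u,v,t\}$ we have $\kappa(w)\ge a(u,v,t)=\left\lceil\frac{d(u)+d(v)+d(t)}{3}\right\rceil$, because $\{u,v,t\}$ is one of the triangles over which the maximum defining $\kappa(w)$ is taken. Next, suppose for contradiction that all three vertices are heavy in $G$, i.e.\ $d(w)>\kappa(w)\ge\left\lceil\frac{d(u)+d(v)+d(t)}{3}\right\rceil\ge\frac{d(u)+d(v)+d(t)}{3}$ for each $w\in\{u,v,t\}$. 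Summing the three inequalities yields $d(u)+d(v)+d(t)>d(u)+d(v)+d(t)$, a contradiction. Hence at least one vertex of the triangle is light, so it receives all $d(w)$ of its colors already in Step~1 and does not appear in $V_1$; therefore the triangle is not present in $G_1$.

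Finally I would note that $G_1$ is still a cannonball graph by construction: it is the subgraph of $G$ induced on the heavy vertices, equivalently a cannonball graph on the same tetrahedron grid $T$ with the weights replaced by $d_1(w)=\max\{d(w)-\kappa(w),0\}$ (light vertices simply get weight $0$ and drop out, exactly as in the definition of cannonball graphs, where vertices of weight $0$ are excluded). Combined with the triangle-freeness just established, $G_1$ is a triangle-free cannonball graph, as claimed.

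I do not expect any serious obstacle here; the only point requiring a little care is the observation that every triangle of $G_1$, being a $3$-clique of a cannonball graph, must already be a triangle of the grid $T$, so that the $\kappa$-bound from Definition \ref{baseFunction} applies to all three of its vertices. Once that is in place the averaging argument closes the proof immediately.
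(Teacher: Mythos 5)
Your proof is correct and is essentially the paper's own argument: both assume all three vertices of a grid triangle are heavy, use $\kappa(w)\ge a(u,v,t)\ge\frac{d(u)+d(v)+d(t)}{3}$ for each vertex, and sum to obtain the contradiction $d(u)+d(v)+d(t)>d(u)+d(v)+d(t)$. The only cosmetic difference is that the paper writes $d(w)=d_1(w)+\kappa(w)$ and carries the $d_1$ terms through the chain of inequalities, while you sum the strict inequalities $d(w)>\kappa(w)$ directly.
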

\begin{proof}
Assume that there exists a triangle $\{v,u,t\} \in G_1$, which means that $d_1(v),d_1(u),d_1(t)>0$. Then we have:
\begin{eqnarray}
d(v) + d(u) + d(t) &=& d_1(v)+\kappa(v) + d_1(u) + \kappa(u) + d_1(t) + \kappa(t)    \geq \nonumber\\
                &\geq& d_1(v) + d_1(u) + d_1(t) + 3a(u,v,t)   \geq \nonumber\\
                &\geq& d_1(v) + d_1(u) + d_1(t) + d(v) + d(u) + d(t) \nonumber\\
                &>& d(v) + d(u) + d(t) \nonumber
\end{eqnarray}
a contradiction. Therefore, the graph $G_1$ does not contain a 3-clique, so it is a~triangle-free cannonball graph.
\qed
\end{proof}

In Step 2 only vertices with $d_1(v)>\kappa (v)$ (very heavy vertices in $G$) are colored. Each
very heavy vertex in $G$ has enough unused colors in its neighborhood to be finally multicolored.
Namely, if  a  vertex $v$ is very heavy in $G$ then it is isolated in $G_1$ (all its neighbors are light in $G$). 
Otherwise, for some $\{v,u,t\}\in\tau(T)$ we would have $$d(v)+d(u) > 2\kappa (v)+\kappa(u) \ge 3a(v,u,t) \ge d(v)+d(u),$$
a~contradiction.
Without loss of generality we may assume that $bc(v) = 0$.  Denote  
$$ D_1(v) = \min\{\kappa (v) - d(u): \left\{u,v\right\} \in T, bc(u)=1\}, \label{D1def}$$ 
$$ D_2(v) = \min\{\kappa (v) - d(u): \left\{u,v\right\} \in T, bc(u)=2\}, $$
$$ D_3(v) = \min\{\kappa (v) - d(u): \left\{u,v\right\} \in T, bc(u)=3\}. $$
Obviously, $D_1(v),D_2(v),D_3(v)>0$ for very heavy vertices $v$ in $G$. Since in Step 1 each light vertex $t$ uses exactly $d(t)$ colors from its base color palette, we have at least $D_i(v)$ free colors from the $i$-th base color palette. 
Formally it can be proved that

\begin{lem}\label{nierownoscG1}
In $G_1$ for every edge $\left\{v,u\right\} \in E_1$ \jz{ we have}:
$$ d_1(v) + d_1(u) \leq \kappa(v), \quad d_1(u) + d_1(v) \leq \kappa(u).$$
\end{lem}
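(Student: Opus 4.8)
The plan is to reduce the inequality to one application of the definition of $\kappa$ together with the elementary fact that every edge of the tetrahedron grid lies in a triangle of $\tau(T)$. First I would fix an arbitrary edge $\{v,u\}\in E_1$. Since $E_1$ consists exactly of edges of $G$ whose both endpoints are heavy in $G$, we have $d_1(v)=d(v)-\kappa(v)$ and $d_1(u)=d(u)-\kappa(u)$, so the two claimed inequalities are equivalent to $d(v)+d(u)\le 2\kappa(v)+\kappa(u)$ and $d(v)+d(u)\le \kappa(v)+2\kappa(u)$, respectively.

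Next I would observe that, because $v$ and $u$ are adjacent in the cannonball graph (hence at euclidean distance one in $T$), there is a vertex $t$ of $T$ with $\{v,u,t\}\in\tau(T)$: for an edge lying inside a single layer this is just a triangle of the corresponding triangular grid, and for an edge joining two consecutive layers one may take $t$ to be a common neighbour of $v$ and $u$ among the three balls of the lower layer that touch the upper ball. The vertex $t$ need not belong to $G$, but in any case $d(t)\ge 0$.

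Then the proof closes using Definition \ref{baseFunction}: since $\{v,u,t\}$ is a triangle of $T$ containing both $v$ and $u$,
$$\kappa(v)\ \ge\ a(v,u,t)\ \ge\ \frac{d(v)+d(u)+d(t)}{3}\qquad\text{and}\qquad \kappa(u)\ \ge\ a(v,u,t)\ \ge\ \frac{d(v)+d(u)+d(t)}{3}.$$
Adding twice the first inequality to the second yields $2\kappa(v)+\kappa(u)\ge d(v)+d(u)+d(t)\ge d(v)+d(u)$, which rearranges exactly to $d_1(v)+d_1(u)=\bigl(d(v)-\kappa(v)\bigr)+\bigl(d(u)-\kappa(u)\bigr)\le\kappa(v)$; interchanging the roles of $v$ and $u$ gives $d_1(v)+d_1(u)\le\kappa(u)$.

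The only point requiring any care — the ``main obstacle,'' such as it is — is the structural claim that every edge of $T$ is contained in some triangle of $\tau(T)$. This follows immediately from the description of the tetrahedron grid (and is already implicit in the construction of the base colouring, where each ball of a new layer touches three pairwise adjacent balls of the previous one), so I would simply state it and proceed. Everything else is a one-line computation, and the argument does not use heaviness of $v,u$ beyond the two identities $d_1(\cdot)=d(\cdot)-\kappa(\cdot)$ already established in Step~1.
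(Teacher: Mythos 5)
Your proof is correct and follows essentially the same route as the paper's: both rest on the existence of a triangle $\{v,u,t\}\in\tau(T)$ containing the edge, the bounds $\kappa(v),\kappa(u)\ge a(v,u,t)$ from Definition~\ref{baseFunction}, and $3a(v,u,t)\ge d(v)+d(u)+d(t)\ge d(v)+d(u)$; the paper merely phrases it as a proof by contradiction while you argue directly. No gaps.
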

\begin{proof}
Assume that $v$ and $u$ are heavy vertices in $G$ and $d_1(v) + d_1(u) > \kappa (v)$.
Then for some $\{v,u,t\}\in\tau(T)$ we have:
$$ d(v)+d(u) = d_1(v) + \kappa (v) + d_1(u) + \kappa(u) > 2\kappa (v) + \kappa(u) \geq 3a(u,v,t) \geq d(u) + d(v),$$ 
a~contradiction.
\qed
\end{proof}

Further useful observation is 

\begin{fact}\label{Stage1}
$$\omega(G_2) \leq \left\lceil \frac{\omega(G)}{3} \right\rceil .$$
\end{fact}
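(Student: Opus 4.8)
The plan is to reduce the bound to the structural facts about $G_1$ that are already available. First I would note that $G_2$ is the subgraph of $G_1$ induced on $V_2$, so by Lemma~\ref{G1tf} it is again a triangle-free cannonball graph. Consequently every clique of $G_2$ has at most two vertices, and $\omega(G_2)=\omega_2(G_2)$ is the maximum of $d_2(v)+d_2(u)$ over edges $\{v,u\}\in E_2$ together with the maximum of $d_2(v)$ over isolated vertices of $G_2$. So it suffices to bound these two quantities by $\lceil\omega(G)/3\rceil$.

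The next, and really the only slightly delicate, step is to identify the weights $d_2$ on $V_2$. Step~2 modifies the demand only of the very heavy vertices of $G$, and each such vertex gets fully coloured there: it is isolated in $G_1$, and as explained before the statement it has at least $D_1(v),D_2(v),D_3(v)>0$ free colours in the three relevant base palettes, which is enough to satisfy $d_1(v)$. Hence $d_2(v)=0$ for every very heavy vertex, so $V_2$ contains no very heavy vertex and no vertex of $V_2$ is touched in Step~2; therefore $d_2(v)=d_1(v)=d(v)-\kappa(v)$ for all $v\in V_2$, with $0<d_1(v)\le\kappa(v)$, the last inequality holding because $v$ is heavy but not very heavy in $G$.

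With $d_2=d_1$ on $V_2$ the two cases are immediate. For an isolated vertex $v$ of $G_2$ we get $d_2(v)=d_1(v)\le\kappa(v)\le\lceil\omega(G)/3\rceil$ directly from Fact~\ref{komega}. For an edge $\{v,u\}\in E_2\subseteq E_1$, Lemma~\ref{nierownoscG1} gives $d_1(v)+d_1(u)\le\kappa(v)$, hence $d_2(v)+d_2(u)\le\kappa(v)\le\lceil\omega(G)/3\rceil$ again by Fact~\ref{komega}. Taking the maximum over all cliques of $G_2$ yields $\omega(G_2)\le\lceil\omega(G)/3\rceil$.

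I expect the main obstacle to be purely bookkeeping rather than mathematical: one has to be careful to argue that Step~2 terminates with $d_2(v)=0$ for every very heavy vertex and $d_2(v)=d_1(v)$ on $V_2$, so that Lemma~\ref{nierownoscG1} and Fact~\ref{komega} can be applied verbatim; once this is settled, the inequality is a one-line consequence and needs no further computation. It is also worth recording explicitly that triangle-freeness of $G_2$ is exactly what lets us compute $\omega(G_2)$ from edges and isolated vertices alone, since a triangle-free cannonball graph has $\omega=\omega_2$.
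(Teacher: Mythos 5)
Your overall skeleton coincides with the paper's: triangle-freeness of $G_2$ (inherited from Lemma~\ref{G1tf}) reduces the claim to bounding edges and isolated vertices, and your edge case is exactly the paper's, via Lemma~\ref{nierownoscG1} and Fact~\ref{komega}. The gap is in the isolated-vertex case. You assert that every very heavy vertex is fully coloured in Step~2, so that $V_2$ contains no very heavy vertex and $d_2=d_1$ on all of $V_2$. That is not what the algorithm does, and your justification does not support it: knowing $D_1(v),D_2(v),D_3(v)>0$ only says each of the three foreign base palettes has at least one free colour; it does not give $D_1(v)+D_2(v)+D_3(v)\ge d_1(v)$. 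Step~2 in fact assigns only $\kappa(v)$ further colours to a very heavy vertex, so whenever $d(v)>2\kappa(v)$ such a vertex survives into $G_2$ (as an isolated vertex, being already isolated in $G_1$) with residual demand $d_2(v)=d(v)-2\kappa(v)>0$. Your case analysis therefore omits precisely the vertices that the paper's isolated-vertex argument is written for.

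The omission is repairable by the one observation the paper supplies: by the definition of the base function, $\kappa(v)\ge a(v,u,t)\ge \lceil d(v)/3\rceil$ for any triangle of $T$ containing $v$, hence $d(v)\le 3\kappa(v)$ and so $d_2(v)=d(v)-2\kappa(v)\le\kappa(v)\le\lceil\omega(G)/3\rceil$. Your bound $d_2(v)=d_1(v)\le\kappa(v)$ does correctly cover isolated vertices of $G_2$ that are heavy but not very heavy --- a sub-case the paper's own proof glosses over by declaring all isolated vertices of $G_2$ very heavy --- so the two arguments are complementary on this case; but as written, yours is missing the harder half.
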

\begin{proof}
Recall that in a~cannonball graph the only cliques are tetrahedrons, triangles, edges and isolated vertices.
Since $G_1$ is a~triangle-free cannonball graph, $G_2$ contains no tetrahedron, neither triangle, so we have only edges and isolated vertices to check.

For each edge $ vu \in E_2$, using Lemma \ref{nierownoscG1} and Fact \ref{komega}, we have:
$$ d_2(v) + d_2(u) \leq d_1(v) + d_1(u) \leq \kappa (v) \leq \lceil \omega(G)/3 \rceil .$$

For each isolated vertex $v \in G_2$ we should have $d_2(v) \leq \left\lceil \omega(G)/3 \right\rceil$.
Recall that $v$ is very heavy, so $d_2(v) = d(v) - 2\kappa(v)$ because the vertex has received $\kappa(v)$ colors in Step 1 and in Step 2.
We claim that  $d_2(v) \leq \kappa(v)$.
Indeed, if   $d_2(v) > \kappa(v)$, then $d(v) = d_2(v) + 2\kappa(v) > 3\kappa(v)$ contradicting the definition of $\kappa(v)$.
Hence, $d_2(v) \leq \kappa(v) \leq  \left\lceil \omega(G)/3 \right\rceil$ as needed.
\qed
\end{proof}
\\ \\

Let $\Delta(G)$ be the maximal vertex degree in the graph $G$. 
In Step 3,   we have to first  prove  that:

\begin{lem}\label{deg4}
$\Delta(G_2) \leq 4$ and every vertex $v$ with $deg_{G_2}(v) = 4$ has at least one free color.
\end{lem}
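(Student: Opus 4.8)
The claim has two parts: the degree bound $\Delta(G_2)\le 4$, and the statement that every vertex of degree exactly $4$ in $G_2$ has a free color. For the first part I would argue about the structure of $G_1$ (hence of $G_2\subseteq G_1$, which is a subgraph induced on a further-restricted vertex set). By Lemma \ref{G1tf}, $G_1$ is triangle-free. A vertex $v$ in a cannonball graph has at most $6$ neighbors in its own layer and at most $3$ in each adjacent layer. Triangle-freeness forces strong restrictions: among the $6$ in-layer neighbors of $v$ (which form a $6$-cycle in the triangular grid), no two consecutive ones can both survive in $G_1$, so at most $3$ in-layer neighbors remain; and the $3$ neighbors in a given adjacent layer form a triangle in that layer's triangular grid, so at most one of them can be in $G_1$. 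Moreover one must check interactions across layers (an in-layer neighbor together with two upper/lower neighbors, etc.) — a vertex $v$, one of its in-layer neighbors $w$, and a common neighbor in an adjacent layer form a triangle, so these cannot all be heavy. Carefully combining these constraints (this is the part that requires looking at figure \ref{12neigh} and doing a small case analysis on which of the $12$ grid-neighbors can simultaneously be heavy) should yield $deg_{G_1}(v)\le 4$, hence $deg_{G_2}(v)\le 4$.

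For the second part, suppose $v\in V_2$ has $deg_{G_2}(v)=4$. Then $v$ has $4$ neighbors in $G_2$, all heavy, and by the triangle-freeness these four neighbors are "spread out" — the extremal configuration is forced (e.g. two in-layer neighbors at distance giving no triangle, plus one in the layer above and one in the layer below, or some symmetric variant). Since $G$ has $4$ base colors and the base coloring is proper, $v$ together with its (at most $12$) grid-neighbors uses the $4$ base colors; I want to show that one of the four base colors does not appear among the colors currently blocking $v$ in $G_2$. The key point: in Step 2 only very heavy vertices receive extra (borrowed) colors, and very heavy vertices are isolated in $G_1$; so a neighbor of $v$ in $G_2$ is heavy but not very heavy, meaning it only ever used colors from \emph{its own} base color palette. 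Hence the set of colors blocked at $v$ by its four $G_2$-neighbors lies in the union of their four base color palettes. Now I would show that these four heavy neighbors do not use all four base colors: because the configuration of four mutually "triangle-avoiding" grid-neighbors of $v$ necessarily has a repeated base color (again a figure-\ref{12neigh} case check — in the extremal $4$-neighbor configuration two of the neighbors share a base color), leaving at least one base color, say $c\ne bc(v)$, completely unused in the neighborhood within $G_2$. That color $c$ is then free for $v$.

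\textbf{Main obstacle.} The crux is the geometric/combinatorial case analysis on figure \ref{12neigh}: identifying exactly which subsets of the $12$ grid-neighbors of $v$ can be pairwise "non-triangle-forming" (so that all can be heavy simultaneously), proving the maximum such subset has size $4$, and checking that every such maximum subset uses at most $3$ of the $4$ base colors. This has to be done for both layer-arrangements (a) and (b), since the adjacency pattern across layers differs between them, so effectively the argument is run twice (or one argues a common feature). The inequalities in Lemma \ref{nierownoscG1} and Lemma \ref{G1tf} do the "heavy vertices avoid triangles" bookkeeping for free; what remains is purely the grid geometry. I would organize it by first listing the forbidden pairs (pairs of grid-neighbors of $v$ that together with $v$ span a triangle), then observing this defines a graph on the $12$ positions whose independence number is $4$, and finally reading off base colors of the maximal independent sets.

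Once both parts are in hand, $G_3$ (obtained in Step 3 by fully coloring the degree-$4$ vertices of $G_2$ using their free base colors and then removing them) has maximum degree $\le 3$; combined with Fact \ref{Stage1} and the fact that bounded-degree-$3$ cannonball subgraphs are $3$-colorable, this feeds directly into Step 4 via Procedure \ref{trzykolorowanie}.
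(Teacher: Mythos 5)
Your overall strategy is the same as the paper's: use the triangle-freeness of $G_1$ (Lemma \ref{G1tf}) to restrict which of the $12$ grid-neighbours of $v$ can simultaneously survive into $G_2$, conclude $\Delta(G_2)\le 4$, and then inspect the forced degree-$4$ configurations to exhibit a base color absent from the neighbourhood. The paper organizes the degree bound by cases on the number of in-layer neighbours ($3$ in-layer forces $0$ cross-layer; $2$ in-layer allows at most $1$ above and $1$ below; fewer gives degree $\le 3$), which is exactly the "forbidden pairs / independence number" computation you describe, and it too delegates the decisive step to the figures for both layer-arrangements. So the plan is sound and the degree-bound half is essentially complete.

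There is, however, a genuine slip in your free-color half. You argue: the four neighbours have a \emph{repeated} base color, "leaving at least one base color completely unused." That inference is invalid: four neighbours drawn from the three colors $\{0,1,2,3\}\setminus\{bc(v)\}$ always repeat a color, yet they may still realize all three (e.g.\ colors $1,1,2,3$), in which case no color other than $bc(v)$ is free — and $bc(v)$ itself is useless for borrowing, since $v$ has already consumed the bottom of its own palette and the accounting of the ratio requires the borrowed colors to lie in the first $\lceil\omega(G)/3\rceil$ colors of a \emph{different} palette. Likewise your stated verification target, "every maximum independent set uses at most $3$ of the $4$ base colors," is trivially true and insufficient. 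What must be checked — and what the geometry actually delivers, as the paper's Figures \ref{Free_a} and \ref{Free_b} show — is the stronger statement that the four neighbours of a degree-$4$ vertex use at most \emph{two} of the three colors different from $bc(v)$: the two in-layer neighbours are either opposite (same base color) or at angle $2\pi/3$, and the triangle constraints then force the unique admissible upper and lower neighbours to repeat those same colors, so at least one color $c\ne bc(v)$ is missing entirely. Replace the pigeonhole remark with this explicit color computation for each of the (few) configurations, in both arrangements (a) and (b), and your proof closes.
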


\begin{proof}
Let $v$ be an arbitrary vertex in the graph $G_2$. \ps{Recall that by Lemma \ref{G1tf} \ps{graph} $G_2$ is triangle-free. 
 Therefore, vertex $v$} can have at most 3 neighbors \ps{in its layer}, \ps{ and} the angle between any two of them is $2\pi / 3$. 
  In this case vertex $v$ cannot have any additional neighbor in \ps{the} lower or \ps{in the} upper layer, therefore $deg_{G_2}(v) = 3$. If the vertex $v$ has only 2 neighbors 
in its layer, then we have two different possibilities for the angle between the neighbors: $2\pi / 3$ and $\pi$. Both possibilities on \rw{the} layer-arrangement (a) are 
depicted on Figure \ref{Free_a} and the (b) case is depicted \ps{in} Figure \ref{Free_b}. It is easy to see  that vertex $v$ could have at most two additional neighbors in
 lower and upper layer - otherwise we obtain a triangle.
Suppose that $deg_{G_2}(v) = 4$, then all possible cases of its neighbourhood are shown \ps{in} Figures \ref{Free_a} and \ref{Free_b}. 
It is easy to see that in both cases (1) vertex $v$ can borrow color 1, and in both cases (2) vertex $v$ can borrow color 2 or 3.
\end{proof}

\begin{figure}[h]
 \begin{center}
 \includegraphics[height=4cm]{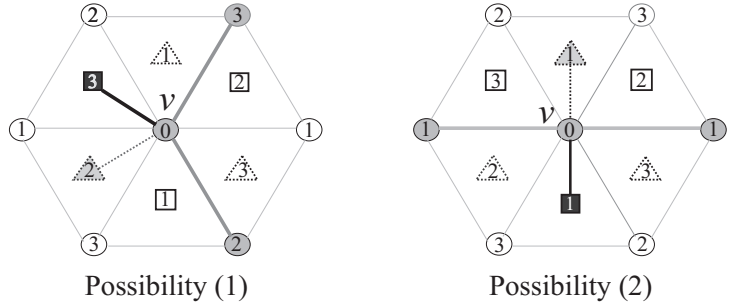}
 \caption{Two different possibilities for  neighbourhood of  vertex $v$ with $deg_{G_2}(v)=4$ in a triangle-free cannonball graph, obtained from \rw{the} layer-arrangement (a). Circles represent vertices of the middle layer, squares of the upper layer and triangles of the lower layer, and white vertices are  part of the grid, but \ps{are} not in the graph.
 } 
 \label{Free_a}
 \end{center}
\end{figure} 

\begin{figure}[h]
 \begin{center}
 \includegraphics[height=4cm]{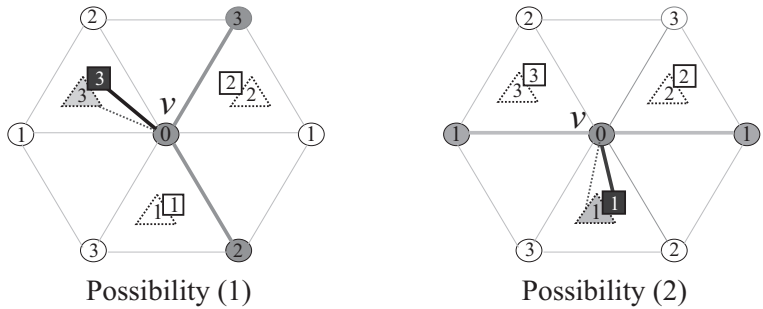}
 \caption{ Two different possibilities for   vertex $v$ neighbourhood with $deg_{G_2}(v)=4$ in a triangle-free cannonball graph, 
obtained from \rw{the} layer-arrangement (b). Notation has the same meaning as in Figure \ref{Free_a}.}
 \label{Free_b}
 \end{center}
\end{figure}

By Lemma \ref{deg4} we know that  borrowing is possible for all vertices of degree 4. 

In Step 3 we take the colors from the free base color palettes. Without loss of generality, assume that $bc(v) = 0$ and one of its free color is 1. Recall the function $D_1$ from page \pageref{D1def} -- we have $D_1(v)$ free colors from the first base color palette. We claim that

\begin{lem}\label{pozyczanie}
 $$d_2(v) \leq D_1(v)$$ 
\end{lem}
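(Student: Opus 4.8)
The goal is to bound $d_2(v)$ from above by $D_1(v)$ for a vertex $v$ with $bc(v)=0$ processed in Step~3, where $1$ is a free color of $v$. Recall $D_1(v) = \min\{\kappa(v) - d(u) : \{u,v\}\in T,\ bc(u)=1\}$, so the claim is equivalent to showing $d_2(v) + d(u) \le \kappa(v)$ for every grid-neighbor $u$ of $v$ with base color $1$. I would split on the type of such a neighbor $u$.

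\textbf{Step 1: bound $d_2(v)$ in terms of $\kappa(v)$ and $d_1(v)$.} Since $v \in V_2$ and $v$ has $\deg_{G_2}(v)=4$, in particular $v$ is not isolated in $G_2$, hence (after Step~2) $v$ is not very heavy in $G$; thus $d_2(v) = d_1(v) = d(v) - \kappa(v)$. Also $v$ being heavy gives $d_1(v) \le$ something controlled by the triangle averages through $v$. I would record the inequality $d_1(v) \le \kappa(v)$ (immediate from $d(v) \le 2\kappa(v)$, which holds because $v$ is not very heavy), but more importantly I will use the sharper per-triangle bounds below.

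\textbf{Step 2: case analysis on the neighbor $u$ with $bc(u)=1$.} If $u$ is a neighbor of $v$ in the graph $G$ (i.e. $d(u)>0$ and $\{u,v\}\in E$), there are two sub-cases. If additionally $u$ is heavy in $G$, then $\{u,v\}\in E_1$, and Lemma~\ref{nierownoscG1} gives $d_1(v) + d_1(u) \le \kappa(v)$; since $d_1(u) = d(u) - \kappa(u) \ge 0$ is not quite $d(u)$, I need a triangle-averaging argument instead: pick a common grid-neighbor $t$ of $u$ and $v$ forming a triangle, and use $d(v)+d(u)+d(t)\le 3a(u,v,t)\le 3\kappa(v)$ together with $d_2(v) = d(v)-\kappa(v)$ to extract $d_2(v) + d(u) \le \kappa(v) - d(t) + \text{(slack)}$, hence $\le \kappa(v)$ once one checks $d(t) \ge 0$ and the rounding works out. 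If $u$ is light in $G$, it received all $d(u)$ colors from base-palette $1$ in Step~1; but then color $1$ being \emph{free} for $v$ forces $d(u)$ to be small — in fact the freeness of color $1$ at $v$ is exactly what Step~3 uses, and combined with the triangle through $v$, $u$ and a third vertex we again get $d(v) + d(u) \le 3\kappa(v) - d(t)$, giving the bound. If $u$ is not a vertex of $G$ (weight $0$), then $d(u)=0$ and $\kappa(v) - d(u) = \kappa(v) \ge d_1(v) = d_2(v)$ trivially.

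\textbf{Main obstacle.} The delicate point is the case where $u$ is a \emph{heavy} neighbor of $v$ with $bc(u)=1$: there $d_1(u)$ can be positive, so color $1$ is genuinely used by $u$ in Step~1, and I must argue that color $1$ can still be ``free'' for $v$ in the sense required — this is where the structure established in Lemma~\ref{deg4} (the explicit neighborhood configurations in Figures~\ref{Free_a} and~\ref{Free_b}) has to be invoked to see that the particular borrowed color $1$ corresponds to a grid-neighbor of $v$ that is \emph{not} an actual graph-neighbor, i.e. has weight $0$ or is light with small demand. Concretely, the plan is: identify, from the degree-$4$ configuration, which grid-position of base color $1$ adjacent to $v$ is guaranteed to be absent from $G$ (a white vertex in the figures) or light, and then the minimum defining $D_1(v)$ is attained (or bounded below) at that position, reducing to the easy cases above. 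The rest is the routine triangle-average bookkeeping with the ceiling in the definition of $\kappa$.
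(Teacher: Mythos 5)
There is a genuine gap in the central estimate. You correctly reduce the claim to showing $d_2(v)+d(u)\le\kappa(v)$ for every grid-neighbour $u$ of $v$ with $bc(u)=1$, and you correctly record $d_2(v)=d_1(v)=d(v)-\kappa(v)$ (since a degree-$4$ vertex of $G_2$ is not very heavy). But in both non-trivial subcases you propose to finish via $d(v)+d(u)+d(t)\le 3a(u,v,t)\le 3\kappa(v)$ for a third vertex $t$ of a grid triangle $\{u,v,t\}$, ``once one checks $d(t)\ge 0$ and the rounding works out.'' That chain only yields $d_2(v)+d(u)\le 2\kappa(v)-d(t)$, so you need $d(t)\ge\kappa(v)$ (or, sharper, $d(t)\ge a(u,v,t)$, which gives $d(v)+d(u)\le 3a(u,v,t)-d(t)\le 2a(u,v,t)\le 2\kappa(v)$). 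The check $d(t)\ge 0$ is far from sufficient: with $d(t)=0$ the inequality $d(v)+d(u)\le 3\lceil(d(v)+d(u))/3\rceil$ is essentially vacuous and produces no bound.

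The missing idea is the \emph{choice} of $t$: it must be one of the four $G_2$-neighbours of $v$, hence heavy in $G$, so that $d(t)>\kappa(t)\ge a(u,v,t)$ supplies exactly the slack the computation needs. The role of the degree-$4$ configurations of Lemma~\ref{deg4} (Figures~\ref{Free_a} and~\ref{Free_b}) is precisely to guarantee that, for the free colour $1$, every grid-neighbour $u$ of $v$ with $bc(u)=1$ lies in a grid triangle $\{u,v,t\}$ with such a $t$. Once this is in place the argument is uniform in $u$, so your case split on whether $u$ is heavy, light or absent is unnecessary, and your ``main obstacle'' paragraph is aimed at the wrong difficulty: the issue is not whether the colour-$1$ neighbour $u$ can be heavy, but where the heavy \emph{third} vertex $t$ of the triangle comes from.
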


\begin{proof}
Let $v$ be a vertex in $G_2$ with $bc(v)=0$.
If   vertex $v \in G_2$ has \rw{ four} neighbors in $G_2$, it always has such free colors that all neighbors on its layer of this base colors are not in $V(G_2)$ 
(see Figures \ref{Free_a} and \ref{Free_b}). 
Without loss of generality assume that one of these free colors is 1.
Let $t$ be a vertex, which is an existing neighbor of $v$ in $G_2$, and $u$ is the neighbor of $v$ with $bc(u)=1$ so that $\{u,v,t\}\in\tau(T)$ is a~triangle.
Then we have
$$ \kappa(v) + d_2(v) + a(u,v,t) + d(u) \stackrel{\star}{\leq} d(v) + d(t) + d(u) \leq 3 a(u,v,t) \leq a(u,v,t) + 2\kappa(v)$$
and the inequality $\star$ occurs because $d_2(v) = d(v) - \kappa(v)$ and $d(t) > \kappa(t) \geq a(u,v,t)$.
Since vertex $u$ has to belong to \rw{ the} triangle $\{u,v,t\} \in \tau(T)$ where $t$ is a heavy vertex in $G$,
it holds $ d_2(v) \leq \kappa(v) - d(u)  $
and finally $ d_2(v) \leq  D_1(v).$
\end{proof}

In Step 4 we have to prove that for $G_3$ we can apply \ps{Procedure}   \ref{trzykolorowanie}. 
We know that
$ \Delta(G_3) \leq 3$
since $ \Delta(G_2) \leq 4$ and in Step 3 we had fully colored all vertices with degree equal \rw{to} 4.
According to theorem   of Brooks  \cite{Brooks} we know that $G_3$ is 3-colorable. Therefore, we can apply Procedure \ref{trzykolorowanie} and multicolor  $G_3$  
 by using $\frac{3}{2}\omega(G_3)$ new colors.

\subsection*{Ratio}

We claim that during the first \ps{three steps} our algorithm uses at most $\frac{4}{3}\omega_3(G)+O(1)$ colors.
To see this, notice that in Step 1 each vertex $v$ uses at most $\kappa(v)$ colors from its base color palette and, 
by Fact \ref{komega} and using  that there are four base colors, we know that no more than 
$4\left\lceil \omega(G)/3 \right\rceil \leq \frac{4}{3}\omega(G)+O(1)$ colors are needed. Note also that in \ps{Step 2 and} Step 3 we use only 
those colors from \rw{the} base color palettes which were not used in Step 1, so altogether no more than $\frac{4}{3}\omega(G)+O(1)$ colors
 from \rw{the} base color palettes are used in total \jz{until} \ps{Step 4}.

\noindent In Step 4 we introduce  new palettes that contain no more than $\left\lceil \frac{3}{2}\omega(G_3) \right\rceil$ colors (by  Lemma \ref{kgoodlemma}) .

Let $A(G)$ denote the number of colors used by our algorithm for the graph $G$.
Thus, since $\omega(G_3) \leq \lceil \omega(G)/3 \rceil \leq \omega(G)/3 + O(1)$, the total number of colors used
by our algorithm is at most
$$ A(G) \leq \frac{4}{3}\omega(G) + \frac{3}{2}\omega(G_3) + O(1) = \frac{4}{3}\omega(G) + \frac{3}{6}\omega(G) + O(1) = \frac{11}{6}\omega(G) + O(1).$$

The performance ratio for our algorithm is $11/6$, hence we arrived at the statement of Theorem \ref{twierdzenie}.

%
%
%

\section{Conclusion}

In this paper we provide an algorithm for a proper multicoloring of \rw{a} cannonball graph that uses at most $\frac{11}{6}\omega(G)+C$ colors. 
As this is the first result for the multicoloring problem of  cannonball graphs, we belive that further improvements can be done.
Among the interesting problems that remain  open  are improvement of the competitive ratio $11/6$, finding some distributed algorithms for \ps{multicoloring} cannonball graphs,
or finding some $k$-local algorithms for some $k$, similarly as in 2D case for  hexagonal graphs  (for definition of $k$-local algorithms see \cite{Janssen}).
We already mentioned that in the 2D case, better bounds were obtained for  triangle-free hexagonal graphs. 
It is very likely that also for   cannonball graphs \jz{there} exist some "forbidden" subgraphs $H$, maybe tetrahedrons, 
such that better bounds can be obtained for $H$-free cannonball graphs.

\end{document}